\newtheorem{thm}{Theorem}[section]
\newtheorem{cor}[thm]{Corollary}
\newtheorem{example}[thm]{Example}
\newtheorem{proposition}[thm]{Proposition}
\theoremstyle{remark}
\theoremstyle{definition}
\newcounter{indice}
\newcommand{\cyclefig}[1]{\begin{tikzpicture}[scale= 0.35]
        \cycle{#1}
      \end{tikzpicture} \hspace{2.7mm}\vspace{0.9mm}}
\newcommand{\cycle}[1]{ 
  \setcounter{indice}{0};
  \foreach \i in {#1}
  \addtocounter{indice}{1};
  \addtocounter{indice}{1}
  \draw [dashed, very thin] (0,0) grid (\theindice-1,\theindice-1);
  \setcounter{indice}{1};
  \foreach \i in { #1 } { 
  \draw[draw=white, very thick, double=black] (\theindice-.5, \theindice-.5)--(\theindice-.5, \i-.5);
  \draw (\i-.5, \i-.5)--(\theindice-.5, \i-.5);
  \draw (\theindice-.5,\i-.5) [fill] circle (.18);
  \addtocounter{indice}{1};
  }
  \addtocounter{indice}{-1};
}
\DeclareMathOperator{\cyc}{cyc}
\DeclareMathOperator{\fl}{fl}
\DeclareMathOperator{\td}{td}
\begin{document}

\title{The shallow permutations are the unlinked permutations}

\author{Alexander Woo}
\address{Department of Mathematics, University of Idaho, P.O. Box 441103,
Moscow, ID 83844-1103}
\email{awoo@uidaho.edu}
\thanks{AW was partially supported by Simons Collaboration Grant 359792.}

\subjclass{05A05;57K10}

\keywords{}

\date{}

\begin{abstract}
Diaconis and Graham studied a measure of distance from the identity in the symmetric group called total displacement and showed that it is bounded below by the sum of length and reflection length.  They asked for a characterization of the permutations where this bound is an equality; we call these the shallow permutations.  Cornwell and McNew recently interpreted the cycle diagram of a permutation as a knot diagram and studied the set of permutations for which the corresponding link is an unlink.  We show the shallow permutations are precisely the unlinked permutations.  As Cornwell and McNew give a generating function counting unlinked permutations, this gives a generating function counting shallow permutations.
\end{abstract}

\maketitle

\section{Introduction}

There are many measures for how far a given permutation $w\in S_n$ is from being the identity.  The most classical are length and reflection length, which are defined as follows.  Let $s_i$ denote the adjacent transposition $s_i=(i\,\,i+1)$ and $t_{ij}$ the transposition $t_{ij}=(i\,\,j)$.  The {\bf length} of $w$, denoted $\ell(w)$, is the smallest integer $\ell$ such that there exist indices $i_1,\ldots,i_\ell$ with $w=s_{i_1}\cdots s_{i_\ell}$.  It is classically known that the length of $w$ is equal to the number of inversions of $w$; an {\bf inversion} is a pair $(a,b)$ such that $a<b$ but $w(a)>w(b)$.  The {\bf reflection length} of $w$, which we will denote $\ell_T(w)$, is the smallest integer $r$ such that there exist indices $i_1,\ldots,i_r$ and $j_1,\ldots,j_r$ with $w=t_{i_1j_1}\cdots t_{i_rj_r}$.  It is classically known that $\ell_T(w)$ is equal to $n-\cyc(w)$, where $\cyc(w)$ denotes the number of cycles in the cycle decomposition of $w$.

Another such measure is {\bf total displacement}, defined by Knuth~\cite{KnuAOCP} as $\td(w)=\sum_{i=1}^n |w(i)-i|$ and first studied by Diaconis and Graham~\cite{DG} under the name Spearman's disarray.  Diaconis and Graham showed that $\ell(w)+\ell_T(w)\leq\td(w)$ for all permutations $w$ and asked for a characterization of those permutations for which equality holds.  More recently, Petersen and Tenner~\cite{PT} defined a statistic they call {\bf depth} on arbitrary Coxeter groups and showed that, for any permutation, its total displacement is always twice its depth.  Following their terminology, we call the permutations for which the Diaconis--Graham bound is an equality the {\bf shallow} permutations.

\begin{figure}[hbtp]
\cyclefig{7,5,6,3,4,2,1}
\caption{\label{diag:7563421}Knot diagram for $w=7563421$}
\end{figure}

In a recent paper, Cornwell and McNew~\cite{CM} interpreted the cycle diagram of a permutation as a knot diagram and studied the permutations whose corresponding knots are the trivial knot or the trivial link.  Given a permutation $w$, to obtain the {\bf cycle diagram}, draw a horizontal line between the points $(i,i)$ and $(w^{-1}(i),i)$ for each $i$ and a vertical line between $(j,j)$ and $(j,w(j))$ for each $j$.  Turn the cycle diagram into a {\bf knot diagram} by designating every vertical line to cross over any horizontal line it meets.  For example, Figure~\ref{diag:7563421} shows the knot diagram for $w=7563421$.  They say that a permutation is {\bf unlinked} if the knot diagram of the permutation is a diagram for the unlink, a collection of circles embedded trivially in $\mathbb{R}^3$.  In their paper, they mainly consider derangements, but it is easy to modify their definitions to consider all permutations by treating each fixed point as a tiny unknotted loop.

Our main result is the following:

\begin{thm}
A permutation is shallow if and only if it is unlinked.
\end{thm}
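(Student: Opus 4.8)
The plan is to prove both directions by induction on $n$, using a single structural reduction that simultaneously controls the Diaconis--Graham defect $\td(w)-\ell(w)-\ell_T(w)$ and the link type of the knot diagram. The key observation is that the cycle diagram of $w$ has a natural ``peeling'' operation: consider the largest value $n$ and the position $p=w^{-1}(n)$ (or, dually, the point $n$ and its image $w(n)$). The horizontal and vertical segments incident to the points $(p,p)$, $(p,n)$, $(n,n)$, $(n,w(n))$ form the top and rightmost part of the diagram, and one can check how removing the row and column $n$ (equivalently, passing to the permutation $w'$ on $S_{n-1}$ obtained by deleting $n$ from the one-line notation or from a cycle) changes each of the three statistics. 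The standard computation shows $\td(w)-\td(w')$, $\ell(w)-\ell(w')$, and $\ell_T(w)-\ell_T(w')$ each take a small number of values depending only on the local picture near $n$, and shallowness of $w$ forces shallowness of $w'$ together with a constraint on that local picture.

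The main steps, in order, are as follows. First I would recall (or reprove) the Diaconis--Graham inequality in a form that tracks equality: show that $\td(w) - \ell(w) - \ell_T(w) = 2\cdot d(w)$ for a nonnegative integer $d(w)$, and establish a recursion $d(w) = d(w') + \varepsilon(w)$ where $\varepsilon(w)\in\{0,1\}$ is determined by whether the value $n$ sits ``on top of'' its cycle in a way that creates an unavoidable extra inversion-vs-transposition mismatch; concretely, $\varepsilon(w)=0$ exactly when $n$ is a fixed point, or $w^{-1}(n)$ and $w(n)$ are positioned so that splicing $n$ out of its cycle costs one inversion and removes one from the reflection length bound in lockstep. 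Second, on the topological side, I would show that the same peeling operation corresponds to a Reidemeister-type simplification of the knot diagram: deleting row and column $n$ either removes a free unknotted loop (when $n$ is a fixed point, or more generally when the strand through $(n,n)$ can be isotoped off), realizing a Reidemeister I / disjoint-circle removal, or it performs a crossing change / smoothing that does not alter the link type precisely in the cases where $\varepsilon(w)=0$. Thus $w$ is unlinked iff $w'$ is unlinked and $\varepsilon(w)=0$. Third, combining the two recursions with the base case $n=1$ (the identity, which is both shallow and unlinked), the induction closes: $d(w)=0 \iff w$ unlinked.

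To make the peeling canonical I would case-split on the cycle of $w$ containing $n$: if it is a fixed point, both reductions are trivial (remove an isolated loop; $d$ and link type unchanged). Otherwise write that cycle and consider the two neighbors $a=w^{-1}(n)$ and $b=w(n)$; the relevant data is the relative order of $a$, $b$, and the interval structure they cut out, which governs whether the arc near the corner is nested or linked with the rest of the diagram. I would verify the three-way statistic computation by a direct but short case analysis (this is the routine part) and verify the topological claim by exhibiting the explicit isotopy in each case, using that vertical-over-horizontal crossings near a locally maximal corner admit a Reidemeister~I move or a planar slide.

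The hard part, and where I would spend the most care, is the topological claim that peeling does not change the link type exactly when $\varepsilon(w)=0$: the diagram is highly non-generic (all crossings follow the same over/under rule, and strands are axis-parallel), so I must argue that no hidden linking is created or destroyed when I remove the outermost row and column. The cleanest route is probably to track a single strand --- the one entering the corner at $(n,n)$ --- and show it bounds an embedded disk meeting the rest of the diagram in a controlled way precisely under the $\varepsilon(w)=0$ hypothesis, so that its removal is an ambient isotopy; when $\varepsilon(w)=1$ that strand genuinely links another component, contributing a nontrivial Hopf-type linking that the $d$-count also detects. Getting the bookkeeping of this disk right, uniformly across the cycle cases, is the crux; everything else is a finite verification.
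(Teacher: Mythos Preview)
Your strategy differs from the paper's in a key structural way: you use a \emph{single} reduction (peel off the value $n$, passing from $w$ to the $w'$ of Theorem~\ref{thm:shallow}) for both directions, whereas the paper uses two different reductions --- the Cornwell--McNew flattening $\fl_i$ at an index with $|w(i)-i|\le 1$ for ``unlinked $\Rightarrow$ shallow'', and the Hadjicostas--Monico peeling for ``shallow $\Rightarrow$ unlinked''.  Your ``shallow $\Rightarrow$ unlinked'' half is essentially the paper's argument.  The other half, however, has a real gap.

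First, a small correction: the defect increment $\varepsilon(w)=d(w)-d(w')$ is \emph{not} bounded by $1$.  A short computation (with $j=w^{-1}(n)$, $k=w(n)$, $A=\#\{a<j:w(a)>k\}$, $B=\#\{j<b<n:w(b)<k\}$) gives
\[
d(w)-d(w')=\tfrac12\bigl(A+B-|j-k|\bigr),
\]
which is always $\ge 0$ and vanishes exactly when $A=0$ or $B=0$, i.e.\ exactly under the left-to-right-maximum/right-to-left-minimum condition of Theorem~\ref{thm:shallow}.  For $w=456123$ one gets $\varepsilon(w)=2$.

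The serious issue is the direction ``$w$ unlinked $\Rightarrow$ $\varepsilon(w)=0$ and $w'$ unlinked''.  Your plan is to show that $\varepsilon(w)>0$ forces a nontrivial link, via a ``Hopf-type linking'' between the strand through $(n,n)$ and the rest.  But this cannot work as stated: when the cycle containing $n$ is the \emph{only} cycle, there is no second component to link with, and what you must detect is \emph{knotting}, not linking.  Cornwell and McNew exhibit single-cycle permutations whose diagrams are trefoils; your disk/linking-number heuristic does not see this.  Even in the multi-component case, ``$\varepsilon>0$'' only says there exist $a<j<b$ with $w(a)>k>w(b)$, and turning that into a nonvanishing link invariant uniformly is essentially as hard as the theorem itself.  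The paper avoids this entirely: for ``unlinked $\Rightarrow$ shallow'' it never argues that a diagram is nontrivial, but instead invokes the Cornwell--McNew fact that an unlinked $w$ has some $i$ with $|w(i)-i|\le1$ and $\fl_i(w)$ unlinked, and then does a purely combinatorial bookkeeping of $\ell$, $\ell_T$, $\td$ under $\fl_i$.  To close your argument you would need an independent topological proof that $\varepsilon(w)>0$ obstructs unknottedness; absent that, the induction for this direction does not go through.
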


Readers can check that Figure~\ref{diag:7563421} shows that the diagram of $w=7563421$ is a diagram of the unlink with 2 components, and $\ell(w)=19$, $\ell_T(w)=5$, and $\td(w)=24$, so $\ell(w)+\ell_T(w)=\td(w)$.

Using this theorem and further results of Cornwell and McNew~\cite[Theorem 6.5]{CM}, we obtain a generating function counting shallow permutations.  Let $P$ be the set of shallow permutations, and let
$$G(x)=\sum_{n=0}^\infty \sum_{P\cap S_n} x^n.$$  Then $G$ satisfies the following recurrence.

\begin{cor}
The generating function $G$ satisfies the following recurrence:
$$x^2G^3 + (x^2 - 3x + 1)G^2 + (3x-2)G + 1 =0.$$
\end{cor}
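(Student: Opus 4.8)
The plan is to read the corollary off from our main theorem together with the enumeration of unlinked permutations due to Cornwell and McNew, the only real work being an algebraic change of variables. By the main theorem, a permutation lies in $P\cap S_n$ exactly when it is unlinked, so $G(x)=\sum_{n\ge 0}u_n x^n$, where $u_n$ is the number of unlinked permutations of $\{1,\dots,n\}$; thus it suffices to prove that this series satisfies the displayed cubic. Since a cubic polynomial equation has more than one formal power series solution, I would at the outset also record the normalization $G(0)=1$ and $[x^1]G=1$ (the empty permutation and the unique permutation of a one-element set being the only unlinked permutations of those sizes), so as to pin down the correct branch.

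Next I would invoke Cornwell and McNew~\cite[Theorem 6.5]{CM}. If their statement is already phrased for all permutations, it is essentially the corollary, and nothing is needed beyond matching notation. If, as in most of their paper, it is phrased for \emph{derangements}, with generating function $D(x)=\sum_{m\ge 0}d_m x^m$, then one first observes that deleting the fixed points of a permutation does not change the link type of its diagram: each fixed point is a split unknotted loop, and collapsing its row and column is an isotopy on the rest of the diagram. Hence an unlinked permutation of $\{1,\dots,n\}$ with $k$ fixed points amounts to a choice of $k$ fixed-point positions together with an unlinked derangement pattern on the remaining $n-k$ positions, so $u_n=\sum_k\binom{n}{k}d_{n-k}$, that is,
\[
G(x)=\frac{1}{1-x}\,D\!\left(\frac{x}{1-x}\right).
\]

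Finally, starting from whatever form \cite[Theorem 6.5]{CM} takes --- a polynomial relation for $D$, one directly for $G$, a refined series to be specialized (say by setting a variable tracking the number of link components equal to $1$), or a system of functional equations from which auxiliary series must be eliminated --- I would substitute, clear the denominators coming from the factors $1-x$, and collect powers of $x$ to reach $x^2G^3+(x^2-3x+1)G^2+(3x-2)G+1=0$. I do not expect a conceptual obstacle; if Cornwell and McNew's result is stated as a system then the elimination is the one potentially laborious (but still mechanical) step, and otherwise the main pitfalls are bookkeeping ones: reconciling their conventions (derangements versus all permutations, and whether link components are being tracked) and checking that the branch singled out by $G(0)=1$ and $[x^1]G=1$ is the one the change of variables produces.
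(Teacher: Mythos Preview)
Your proposal is correct and matches the paper's approach: the paper derives the corollary immediately from the main theorem together with \cite[Theorem 6.5]{CM}, without further argument. Your additional discussion of the binomial transform $G(x)=\frac{1}{1-x}D\!\left(\frac{x}{1-x}\right)$ to pass from derangements to all permutations is a sensible precaution, but the paper simply cites the Cornwell--McNew result as already giving the cubic for $G$.
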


This is sequence A301897 (defined as the number of shallow permutations) in the OEIS~\cite{OEIS}.

While this paper was being prepared, Berman and Tenner~\cite{BM21} gave another characterization of shallow cycles that could also be compared with the work of Cornwell and McNew to give our results.

Our proof relies on a recursive description of the set of unlinked permutations due to Cornwell and McNew and a different recursive description of the set of shallow permutations due to Hadjicostas and Monico~\cite{HM}.  We show by induction that all permutations satisfying the description of Cornwell and McNew are shallow and separately that all permutations satisfying the description of Hadjicostas and Monico are unlinked.

The shallow permutations have another surprising connection not previously noted in the literature.  Given a permutation $w$, Bagno, Biagioli, Novick, and the last author~\cite{BBNW} defined the {\bf reduced reflection length} $\ell_R(w)$ as the smallest integer $q$ such that there exist $i_1,\ldots,i_q$ and $j_1,\ldots,j_q$ such that $w=t_{i_1j_1}\cdots t_{i_qj_q}$ and $\ell(w)=\sum_{k=1}^q \ell(t_{i_kj_k})$ and show that the shallow permutations are equivalently the permutations for which $\ell_T(w)=\ell_R(w)$.  Bennett and Blok~\cite{BB} show, using somewhat different language, that reduced reflection length is the rank function on the universal Grassman order introduced by Bergeron and Sottile~\cite{BS} to study questions in Schubert calculus.

Section 2 describes the recursive characterizations of Cornwell and McNew and of Hadjicostas and Monico, while the proof of our main theorem is given in Section 3.

I originally conjectured Theorem 1.1 out of work on a related conjecture in an undergraduate directed research seminar in Spring 2019.  I thank the students in the seminar, specifically Jacob Alderink, Noah Jones, Sam Johnson, and Matthew Mills, for ideas that helped spark this work.  I also thank Nathan McNew for the Tikz code to draw the figures.  Finally, I learned about the work of Cornwell and McNew at Permutation Patterns 2018 and thank the organizers of that conference.

\section{Characterizations of shallow and unlinked permutations}

We now describe the recursive characterizations of unlinked and shallow permutations.

Let $w\in S_n$ be a permutation.  Denote by $\fl_i(w)$ the {\bf $i$-th flattening} of $w$, which is defined by removing the $i$-th entry of $w$ (in one-line notation) and then renumbering down by 1 every entry greater than $w(i)$.
Formally, $$\fl_i(w)(k)=\begin{cases}
w(k) &\mbox{if } k<i \mbox{and } w(k)<w(i) \\
w(k)-1 &\mbox{if } k<i \mbox{and } w(k)>w(i) \\
w(k+1) &\mbox{if } k>i \mbox{and } w(k)<w(i) \\
w(k+1)-1 &\mbox{if } k>i \mbox{and } w(k)>w(i) \\
\end{cases}$$

Cornwell and McNew~\cite{CM} give the following recursive characterization of permutations with unlinked cycle diagrams.

\begin{thm}
Suppose $w$ is unlinked.  Then either
\begin{itemize}
\item $w\in S_1$ (so $w=1$ in one-line notation), OR
\item There exists $i$ with $|w(i)-i|\leq 1$, and $\fl_i(w)$ is unlinked.
\end{itemize}
\end{thm}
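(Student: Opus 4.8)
The plan is to read the knot diagram of $w$ as a grid diagram (an arc presentation) of its link, to show that a flattening at a position $i$ with $|w(i)-i|\le 1$ is an elementary destabilization of that grid diagram, and to show that such a position can never be avoided when the link is trivial. The first thing to record is the grid picture itself: the cycle diagram of $w\in S_n$ is a grid diagram on an $n\times n$ board whose marked cells are the diagonal cells $(i,i)$ together with the cells $(i,w(i))$, in which vertical strands pass over horizontal strands and each fixed point $w(i)=i$ contributes a split unknotted ``tiny loop''. In this language $\fl_i(w)$ is the operation that deletes column $i$ and row $w(i)$ and then splices together the two strands that had run through the deleted cells.

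Granting this, the easy half of the statement is that a flattening at a position $i$ with $|w(i)-i|\le 1$ does not change the link, so that once \emph{any} such $i$ is found the conclusion that $\fl_i(w)$ is unlinked is automatic. If $w(i)=i$, the tiny loop at $(i,i)$ meets no crossing and bounds a disk disjoint from the rest of the diagram, so $\fl_i(w)$ carries the same link with one split unknot deleted. If $w(i)=i\pm 1$, then the vertical strand segment in column $i$ and the horizontal strand segment in row $w(i)$ both have length one, meet at the cell $(i,w(i))$, and form a crossing-free bend that touches the rest of the diagram only at its endpoints $(i,i)$ and $(w(i),w(i))$ on the diagonal; since no strand passes through the small triangle that this bend cuts off from the unit square $[\min(i,w(i)),\max(i,w(i))]^2$, deleting column $i$ and row $w(i)$ and splicing the two strands is a planar isotopy of the diagram, and this is precisely $\fl_i(w)$.

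It remains to produce some $i$ with $|w(i)-i|\le 1$; equivalently, arguing by contradiction, I would show that if $|w(i)-i|\ge 2$ for every $i$ then the diagram of $w$ is not a diagram of any unlink. The hypothesis forces $w$ to be fixed-point free and $n\ge 4$ (the smallest such permutation, $w=3412$, already yields the Hopf link), and one checks at once that the strand segments lying in column $1$, row $1$, column $n$ and row $n$ are crossing-free, so the diagram is already ``irreducible'' in the naive sense while still carrying at least one crossing. The real work is to rule out any \emph{hidden} simplification, since the unknot famously admits reduced diagrams with many crossings and so the argument must exploit the rigidity of the grid. Concretely I would try to prove that the cycle diagram of a non-identity permutation having no position $i$ with $|w(i)-i|\le 1$ is prime and adequate — or otherwise carries a distinguishing invariant — so that its link is forced to be nontrivial; failing that I would build an essential sphere or torus in the complement, or an incompressible Seifert surface for the link, directly from the nesting-and-crossing pattern of the vertical strand segments, or else invoke the monotonic-simplification theory of Cromwell and Dynnikov for arc presentations of unlinks together with the observation that the only reductions keeping one inside the cycle-diagram family are flattenings at positions with $|w(i)-i|\le 1$. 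Producing this nontriviality certificate — equivalently, ruling out every Reidemeister move on the reduced diagram — is the main obstacle; the cleanest bookkeeping seems to call for an induction that peels off the crossing-free boundary segments while tracking how the interior pattern is allowed to degenerate, and it is here that most of the genuine content of the theorem lies.
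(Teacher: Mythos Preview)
Your treatment of the ``easy half'' --- that flattening at an index $i$ with $|w(i)-i|\le 1$ preserves the link type, so that $\fl_i(w)$ is again unlinked --- is fine and matches what the paper uses (it is exactly the grid-diagram destabilization picture). The genuine gap is in the other half: you need to show that an unlinked $w\notin S_1$ \emph{always has} an index $i$ with $|w(i)-i|\le 1$, and your proposal does not prove this. You yourself flag this as ``the real work'' and then list several possible strategies (adequacy, incompressible surfaces, Cromwell--Dynnikov monotonic simplification) without executing any of them. None of these is automatic. For instance, the Cromwell--Dynnikov route says an arc presentation of an unlink can be monotonically reduced by commutations and destabilizations, but commutations take you out of the class of cycle diagrams (the $O$-markings sit on the diagonal, and a row/column commutation destroys that), so you cannot conclude that the \emph{first} destabilization already lives inside the cycle-diagram family. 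Likewise, ``prime and adequate'' is not obvious for these diagrams, and constructing an essential surface from the nesting pattern is a whole paper in itself.

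The paper does not attempt a self-contained argument here; this theorem is explicitly assembled from results of Cornwell and McNew. Their Lemma~6.3, applied repeatedly, shows that an unlinked derangement has some cycle supported on a block of consecutive integers $j,j+1,\ldots,k$; their Proposition~5.10 then produces, inside any such cycle, an index $i$ with $|w(i)-i|=1$; and the passage $D\mapsto D_0$ in the proof of their Proposition~5.11 is identified with $\fl_i$. In other words, the existence of the short step is established by a structural analysis specific to cycle diagrams carried out in the cited paper, not by general knot-theoretic invariants. If you want a self-contained proof, that structural analysis is what you would have to reproduce; the sketches you list do not substitute for it.
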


This characterization is assembled from several statements in their paper, and we consider all permutations instead of only derangements, so we explain how to obtain this statement from their work.  References to specific statements are by the numbering in~\cite{CM}

\begin{proof}
Suppose $w\in S_n$ is unlinked.  If $w(i)=i$ for some $i$, then $|w(i)-i|=0$ and $\fl_i(w)$ is unlinked.  This handles the case where $w$ has a fixed point.

Applying Lemma 6.3 repeatedly until some $\tau_i$ is a single cycle, we see that $w$ has some cycle involving the consecutive entries $j, j+1, \ldots, k$.  Now Proposition 5.10 applied to this cycle shows that there is some index $i$ with $j\leq i\leq k$ such that $|w(i)-i|=1$.  The process of going from the diagram $D$ to the diagram $D_0$ described in the second paragraph of the proof of Proposition 5.11 is precisely $\fl_i$.
\end{proof}

\begin{example}
Let $w=7563421$.  Then $w(4)=3$, so $|w(4)-4|=1$.  Furthermore, $\fl_4(w)=645321$, which is also unlinked.
\end{example}

Given a permutation $w\in S_n$, an index $j$ is a {\bf left-to-right maximum} if $w(j)>w(i)$ for all $i<j$.  An index $j$ is a {\bf right-to-left minimum} if $w(j)<w(i)$ for all $i>j$.

Hadjicostas and Monico~\cite[Theorem 4.1]{HM} give the following recursive characterization of shallow permutations.

\begin{thm}
\label{thm:shallow}
Suppose $w\in S_n$ is shallow.  Then either
\begin{itemize}
\item $w\in S_1$ (so $w=1$ in one line notation), OR
\item $w(n)=n$, and the permutation $w'\in S_{n-1}$ with $w'(i)=w(i)$ for all $i$ is shallow, OR
\item $w(n)=k$, $w^{-1}(n)=j$, and the permutation $w'\in S_{n-1}$ defined by setting $w'(i)=w(i)$ for $i\neq j$ and $w'(j)=k$ is shallow with either a left-to-right maximum or right-to-left minimum at $j$.
\end{itemize}
\end{thm}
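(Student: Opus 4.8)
The statement is the forward implication: if $w\in S_n$ is shallow then it lies in one of the three listed cases. If $n=1$ we are in the first case, so assume $n\geq 2$. If $w(n)=n$, let $w'$ be the restriction of $w$ to $[n-1]$; then position $n$ lies in no inversion, so $\ell(w')=\ell(w)$, the fixed point $n$ accounts for one cycle, so $\ell_T(w')=(n-1)-\cyc(w')=n-\cyc(w)=\ell_T(w)$, and $\td(w')=\td(w)$ since $|w(n)-n|=0$; hence $w'$ is shallow and we are in the second case. So assume henceforth that $w(n)=k$ with $k<n$, put $j=w^{-1}(n)$ (so $j<n$, since otherwise $w(n)=n$), and let $w'\in S_{n-1}$ be given by $w'(j)=k$ and $w'(i)=w(i)$ for $i\neq j$; as $w$ omits the values $n$ and $k$ on $[n-1]\setminus\{j\}$, one checks immediately that $w'$ is a bijection of $[n-1]$. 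The goal is to show $w'$ is shallow and has a left-to-right maximum or a right-to-left minimum at $j$.

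The plan is to compute the change in each statistic from $w$ to $w'$. In the functional digraph, $w$ contains the arcs $j\to n\to k$ and $w'$ replaces them by $j\to k$, so the cycle of $w$ through $n$ simply loses the element $n$; thus $\cyc(w')=\cyc(w)$ and $\ell_T(w')=\ell_T(w)-1$. Only the displacements at positions $j$ and $n$ change, and a short computation gives $\td(w)-\td(w')=(n-j)+(n-k)-|k-j|=2\bigl(n-\max(j,k)\bigr)$. The subtle statistic is the length. Setting $C=\#\{b:j<b<n,\ w(b)>k\}$, I would show $\ell(w)-\ell(w')=2C+1$: the $n-k$ inversions of $w$ with right endpoint $n$ disappear; among positions below $n$, the $n-1-j$ inversions of $w$ with left endpoint $j$ become the $\#\{b:j<b<n,\ w(b)<k\}$ such inversions of $w'$, while $w'$ gains the $\#\{a<j:\ w(a)>k\}$ inversions with right endpoint $j$ that $w$ lacked; since exactly $n-k-1$ values other than $n$ exceed $k$, these contributions combine to $2C+1$.

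With these in hand, adding the two differences gives $\bigl(\ell(w)+\ell_T(w)\bigr)-\bigl(\ell(w')+\ell_T(w')\bigr)=2C+2$. Now $C\le n-1-\max(j,k)$ for purely combinatorial reasons: there are only $n-1-j$ positions strictly between $j$ and $n$, and only $n-1-k$ values exceeding $k$ apart from $n$ (which occupies position $j$). On the other hand, the Diaconis--Graham inequality for $w'$ reads $\ell(w')+\ell_T(w')\le\td(w')$, so using shallowness of $w$ we get $2C+2=\td(w)-\bigl(\ell(w')+\ell_T(w')\bigr)\ge\td(w)-\td(w')=2\bigl(n-\max(j,k)\bigr)$, that is, $C\ge n-1-\max(j,k)$. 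Hence $C=n-1-\max(j,k)$ and equality holds throughout, so $\ell(w')+\ell_T(w')=\td(w')$ and $w'$ is shallow. Moreover $C=n-1-\max(j,k)$ forces the binding one of the two combinatorial bounds to be tight: if $j\geq k$ every position strictly between $j$ and $n$ must carry a value exceeding $k$, so $j$ is a right-to-left minimum of $w'$; if $j\leq k$ no position before $j$ may carry a value exceeding $k$, so $j$ is a left-to-right maximum of $w'$.

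I expect the inversion count $\ell(w)-\ell(w')=2C+1$ to be the main obstacle, as it needs a careful case split on the location of each index relative to $j$ and on whether its value lies above or below $k$, with the pair $(j,n)$ and the values $n$ and $k$ all playing distinguished roles because $w\mapsto w'$ is not an ordinary flattening. The rest — the cycle count, the total displacement difference, and the two bounds on $C$ — is routine once $C$ has been identified as the right quantity to track.
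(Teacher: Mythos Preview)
The paper does not actually give its own proof of this theorem; it is quoted directly as \cite[Theorem 4.1]{HM} from Hadjicostas and Monico, with no argument supplied. So there is no in-paper proof to compare against.

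Your argument, however, is a correct self-contained proof. The three computations are all right: the cycle bookkeeping gives $\ell_T(w')=\ell_T(w)-1$; the displacement difference collapses to $2(n-\max(j,k))$; and your inversion count $\ell(w)-\ell(w')=2C+1$ is correct, the key identity being $A+C=n-k-1$ where $A=\#\{a<j:w(a)>k\}$, since the $n-k-1$ values exceeding $k$ other than $n$ are distributed among the positions $\{1,\ldots,n-1\}\setminus\{j\}$. The sandwich $n-1-\max(j,k)\le C\le n-1-\max(j,k)$ then forces both the shallowness of $w'$ and the extremal condition at $j$, exactly as you describe; the deduction of the left-to-right maximum in the case $j\le k$ via $A=0$ is clean. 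The one place your write-up is terse is the inversion paragraph, where you announce the pieces without quite displaying the cancellation; spelling out $\ell(w)-\ell(w')=(n-k)+C-A$ and then substituting $A=n-k-1-C$ would make the claimed $2C+1$ transparent rather than something the reader must reassemble.
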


\begin{example}
If $w=7563421$, then $w'=156342$ is shallow with both a left-to-right maximum and a right-to-left minimum at position $1$.  If $w=45231$, then $w'=4123$ is shallow with a right-to-left minimum at position $2$.
\end{example}

\section{Proof of Main Theorem}

To prove our main theorem, we use the two recursive characterizations.  We split the proof into two parts, first using the characterization of Cornwell and McNew to prove the fllowing.

\begin{proposition}
Every unlinked permutation is shallow.
\end{proposition}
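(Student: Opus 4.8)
The plan is to induct on $n$ using the recursive characterization of Cornwell and McNew (Theorem 2.2). The base case $w \in S_1$ is trivial since the identity is shallow. For the inductive step, assume $w \in S_n$ is unlinked, so there exists an index $i$ with $|w(i) - i| \leq 1$ and $\fl_i(w)$ unlinked, hence shallow by the inductive hypothesis. I would then split into the two cases $|w(i)-i| = 0$ and $|w(i)-i| = 1$, and in each case build a sequence of reductions landing in one of the three bullets of the Hadjicostas--Monico characterization (Theorem 2.4), thereby certifying that $w$ itself is shallow.

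The case $w(i) = i$ should be the easier one: a fixed point can be deleted from the one-line notation and reinserted without affecting any of $\ell$, $\ell_T$, or $\td$, so shallowness is preserved. Concretely, I would relate $\fl_i(w)$ to the permutation obtained by deleting the fixed point, check that flattening at a fixed point is exactly this deletion, and then either invoke the second bullet of Theorem 2.4 directly (after conjugating the fixed point to position $n$, which is itself a shallow-preserving operation that one must justify) or argue more directly that inserting a fixed point into a shallow permutation keeps it shallow. The case $|w(i) - i| = 1$ is the substantive one. Here $i$ and $w(i)$ are adjacent, so $w$ contains a "small" two-step of the cycle, and $\fl_i(w)$ removes position $i$ while renumbering. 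I would track how the cycle structure, the inversions, and the displacement all change under $\fl_i$, and show that reinserting this near-diagonal entry into the shallow permutation $\fl_i(w)$ increases $\td$ by exactly the increase in $\ell + \ell_T$. The key bookkeeping is that adding back an entry at position $i$ with $|w(i)-i|=1$ adds $1$ to $\td$, and it adds exactly $1$ to $\ell + \ell_T$ as well, because it either splits one cycle into two (increasing $\ell_T$ by one but creating no new inversion beyond a single adjacent one) or merges, with a compensating change in $\ell$.

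The main obstacle, I expect, is matching up the two different recursive frameworks: Cornwell--McNew reduce via $\fl_i$ at an index with $|w(i)-i|\le 1$, whereas Hadjicostas--Monico reduce by peeling off the value $n$ and position $n$, with a side condition about left-to-right maxima or right-to-left minima at the position $j = w^{-1}(n)$. These are genuinely different recursions, so rather than directly translating one reduction step into the other, I anticipate needing a more hands-on argument: take the shallow permutation $\fl_i(w)$, and show directly from the definition that reinserting the entry to form $w$ keeps $\ell(w) + \ell_T(w) = \td(w)$, by establishing that the reinsertion changes each of the three statistics in a controlled way. Establishing $\td(w) = \td(\fl_i(w)) + 1$ is immediate; the delicate part is showing simultaneously that $\ell(w) - \ell(\fl_i(w))$ and $\ell_T(w) - \ell_T(\fl_i(w))$ are nonnegative integers summing to exactly $1$, which will require a careful case analysis on whether $w(i) = i+1$ or $w(i) = i-1$ and on the local structure of $w$ near position $i$. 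I would also want a short lemma that the operations "insert a fixed point" and "conjugate so that the relevant entry is at the end" preserve shallowness, so that, if convenient, the argument can be phrased in the exact language of Theorem 2.4.
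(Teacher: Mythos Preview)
Your overall strategy is the same as the paper's: induct on $n$ via the Cornwell--McNew recursion and track how $\ell$, $\ell_T$, and $\td$ change under $\fl_i$. You are also right to abandon the attempt to translate a Cornwell--McNew step into a Hadjicostas--Monico step; the paper never uses Theorem~\ref{thm:shallow} in this direction and works directly with the definition of shallowness.

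However, your central bookkeeping claim is false. You write that ``$\td(w) = \td(\fl_i(w)) + 1$ is immediate'' and that $\ell(w)-\ell(\fl_i(w))$ and $\ell_T(w)-\ell_T(\fl_i(w))$ must be nonnegative integers summing to exactly $1$. The paper's own example already refutes this: for $w=7563421$ with $i=4$ (so $w(i)-i=-1$), one has $\td(w)-\td(\fl_4(w))=6$, $\ell(w)-\ell(\fl_4(w))=5$, and $\ell_T(w)-\ell_T(\fl_4(w))=1$. The point is that flattening does not merely delete the entry at position $i$; it also renumbers all larger positions and all larger values, and each inversion involving $i$ causes exactly one other entry's displacement to drop by $1$. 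Thus $\td(w)-\td(\fl_i(w))$ equals $|w(i)-i|$ \emph{plus} the number of inversions involving $i$, not just $|w(i)-i|$.

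The correct identities, which are what the paper actually proves, are
\[
\ell(w)-\ell(\fl_i(w)) = \#\{\text{inversions of $w$ involving $i$}\},\qquad
\ell_T(w)-\ell_T(\fl_i(w)) = |w(i)-i|,
\]
and
\[
\td(w)-\td(\fl_i(w)) = \#\{\text{inversions of $w$ involving $i$}\} + |w(i)-i|.
\]
These combine to give $\td(w)-\td(\fl_i(w)) = \bigl(\ell(w)-\ell(\fl_i(w))\bigr) + \bigl(\ell_T(w)-\ell_T(\fl_i(w))\bigr)$, and shallowness of $w$ then follows from shallowness of $\fl_i(w)$. Your remark about cycles ``splitting'' or ``merging'' is also off: when $|w(i)-i|=1$ the index $i$ sits inside a cycle of length at least $2$, and flattening simply excises it, leaving the same number of cycles on one fewer element, so $\ell_T$ drops by exactly one. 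The substantive work you are missing is the verification of the $\td$ identity, which requires noting that for $a\neq i$ the quantity $|\fl_i(w)(a')-a'|$ equals $|w(a)-a|$ or $|w(a)-a|-1$ according to whether $(a,i)$ or $(i,a)$ is an inversion of $w$; this uses that $w(i)\in\{i-1,i,i+1\}$ to control the sign of $w(a)-a$ in the relevant cases.
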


\begin{proof}
We prove this proposition by induction on $n$.  Let $w$ be an unlinked permutation.

For the base case, clearly $\ell(w)+\ell_T(w)=\td(w)$ for the permutation $w=1$.  (Both sides are 0.)

For the inductive case, suppose there exists $i$ with $|w(i)-i|\leq 1$ and $\fl_i(w)$ unlinked.  Given integers $a$ and $b$, let $a'=a$ if $a<i$ and $a'=a-1$ if $a>i$,
and similarly $b'=b$ if $b<i$ and $b'=b-1$ if $b>i$.  Then note that for $a,b\neq i$, $(a,b)$ is an inversion of $w$ if and only if
$(a',b')$ is an inversion of $\fl_i(w)$.  Hence $\ell(w)-\ell(\fl_i(w))$ is equal to the number of inversions involving $i$, or, in notation, the number of pairs $(a,i)$ with $a<i$ and $w(a)>w(i)$ and pairs $(i,b)$ with $i<b$ and $w(i)>w(b)$.

We now split into three cases depending on whether $w(i)-i$ is $0$, $1$, or $-1$.

If $w(i)-i=0$, then $\ell_T(\fl_i(w))=\ell_T(w)$, as $\fl_i(w)$ has one fewer cycle, namely the fixed point $i$ that was removed, and $\fl_i(w)$ is a permutation of one fewer element.
Furthermore, since $w(i)=i$, $|\fl_i(w)(a')-a'|=|w(a)-a|$ if and only if $(a,i)$ or $(i,a)$ is not an inversion of $w$, and $|\fl_i(w)(a')-a'|=|w(a)-a|-1$ if it is an inversion.  (Note that this is
so simple because the sign of $w(a)-a$ is determined by whether $(a,i)$ is an inversion or $(i,a)$ is an inversion.)  Also $w(i)-i=0$.  Hence $\ell(w)-\ell(\fl_i(w))=\td(w)-\td(\fl_i(w))$.

By the inductive hypothesis we can assume $\ell(\fl_i(w))+\ell_T(\fl_i(w))=\td(\fl_i(w))$, so $\ell(w)+\ell_T(w)=\td(w)$.

If $w(i)-i=-1$, then the cycle decomposition of $\fl_i(w)$ is the same as that of $w$ except that $i$ is removed and every $b>i$ is replaced by $b-1$.  (In particular, $\fl_i(w)(w^{-1}(i))=w(i)=i-1$.)  Hence $\ell_T(\fl_i(w))=\ell_T(w)-1$.  Furthermore, also in this case, $|\fl_i(w)(a')-a'|=|w(a)-a|$ if and only if $(a,i)$ or $(i,a)$ is not an inversion of $w$, and $|\fl_i(w)(a')-a'|=|w(a)-a|-1$ if it is an inversion.  However, $|w(i)-i|=1$, so $\td(w)-\td(\fl_i(w))=\ell(w)-\ell(\fl_i(w))+1$.

Again by the inductive hypothesis we can assume $\ell(\fl_i(w))+\ell_T(\fl_i(w))=\td(\fl_i(w))$, so $\ell(w)+\ell_T(w)=\td(w)$.

The proof where $w(i)-i=1$ is similar to the previous case, as again we have $\ell_T(\fl_i(w))=\ell_T(w)-1$ and $\td(w)-\td(\fl_i(w))=\ell(w)-\ell(\fl_i(w))+1$.
\end{proof}

\begin{example}
Let $w=7563421$, and let $i=4$, so $w(i)-i=-1$.  Then$\fl_4(w)=645321$, with $\ell_T(\fl_4(w))=4$.  Furthermore, $\td(w)-\td(\fl_4(w))=6$, and $\ell(w)-\ell(\fl_4(w))=5$.
\end{example}

We now follow the recursive characterization of Hadjicostas and Monico to prove the following:

\begin{proposition}
Every shallow permutation is unlinked.
\end{proposition}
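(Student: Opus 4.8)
The plan is to induct on $n$, this time following the recursive characterization of shallow permutations in Theorem~\ref{thm:shallow} and exhibiting, in each case, the knot diagram of $w$ as one obtained from the knot diagram of a smaller shallow permutation by moves that do not change the link type. The base case $n=1$ is immediate. So let $w\in S_n$ be shallow with $n\geq 2$; Theorem~\ref{thm:shallow} leaves two possibilities.

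If $w(n)=n$, then $w'\in S_{n-1}$ is shallow, hence unlinked by induction, and the knot diagram of $w$ is exactly that of $w'$ with an extra isolated unknotted loop at $(n,n)$; so $w$ is unlinked. The substantive case is the third one: $w(n)=k$, $w^{-1}(n)=j$, and the permutation $w'\in S_{n-1}$ that agrees with $w$ off position $j$ and has $w'(j)=k$ is shallow with a left-to-right maximum or a right-to-left minimum at $j$. By induction $w'$ is unlinked. Replacing $w$ by $w^{-1}$ if necessary --- an operation preserving shallowness and unlinkedness, keeping us in the third case (with the roles of $j$ and $k$ exchanged), and interchanging the two subcases --- I would reduce to the case that $j$ is a left-to-right maximum of $w'$; then every entry of $w'$ (equivalently of $w$) in a position before $j$ is less than $k$, which in particular forces $k=w'(j)\geq j$. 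Comparing the two cycle diagrams directly, one finds that the diagram of $w$ agrees with that of $w'$ except along a single arc of one cycle: where the curve for $w'$ turns the corner at $(j,k)$ --- running up column $j$ to $(j,k)$ and then right along row $k$ to $(k,k)$ --- the curve for $w$ instead runs up column $j$ to row $n$, right along row $n$ to the new diagonal point $(n,n)$, down column $n$ to row $k$, and then left along row $k$ to $(k,k)$, a ``staircase'' tracing three sides of the rectangle with corners $(j,k)$, $(n,k)$, $(n,n)$, $(j,n)$. I would then show this staircase can be retracted back to the corner by a planar isotopy together with Reidemeister moves, so that the diagram of $w$ represents the same link as that of $w'$; the left-to-right maximum hypothesis enters precisely here, as it guarantees that the new portions of column $j$, of row $n$, and of column $n$ meet no other strand, so that only the swinging row-$k$ segment must be slid past other strands.

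The step I expect to be the main obstacle is exactly this last one in the third case: verifying that every strand crossed by the row-$k$ segment as it swings from one side of $(k,k)$ to the other is a strand that can be slid past (so the staircase is undone by Reidemeister moves rather than being essentially knotted or linked with the rest), and that the left-to-right maximum condition is what makes this so. A more combinatorial alternative for the third case would be to prove separately that every shallow permutation other than the identity has an index $i$ with $|w(i)-i|\leq 1$: since the computations in the proof that every unlinked permutation is shallow show that $\ell+\ell_T-\td$ is unchanged by $\fl_i$ whenever $|w(i)-i|\leq 1$, such an $i$ makes $\fl_i(w)$ shallow, hence unlinked by induction, whereupon the easy converse of the Cornwell--McNew reduction (inserting a near-diagonal entry into a diagram changes neither the diagram's link type) gives that $w$ is unlinked. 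Establishing the existence of such an $i$ from Theorem~\ref{thm:shallow} would then become the crux of that approach.
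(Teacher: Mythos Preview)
Your approach is the paper's: induct via Theorem~\ref{thm:shallow}, and in the substantive third case exhibit an isotopy from the diagram of $w$ to that of $w'$. Your reduction to one subcase by passing to $w^{-1}$ is fine (the paper instead treats the right-to-left minimum subcase explicitly and calls the other ``similar''), and your description of the changed arc and of where the left-to-right maximum hypothesis enters --- ruling out crossings on the column-$j$ extension $(j,k)\to(j,n)$ --- is correct.

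The gap is that you flag the sliding of the row-$k$ segment as the main obstacle and suggest the left-to-right maximum condition is what resolves it. In fact that step is immediate from the crossing convention and uses no further hypothesis: every crossing on a horizontal segment is an \emph{under}crossing. Since, as you already observed, the column-$j$ extension, row-$n$, and column-$n$ portions of the new arc meet no other strand, the entire new arc from $(j,k)$ around the rectangle to $(k,k)$ lies below the rest of the diagram; it may therefore be pushed, under everything, to the row-$k$ segment $(j,k)\to(k,k)$ of $w'$, which is also horizontal and hence also only undercrosses. That is the whole isotopy. The left-to-right maximum hypothesis has already done its job in emptying the column-$j$ extension of crossings; once that holds, the rest is free. (In the paper's right-to-left minimum case the roles are reversed: the hypothesis empties the row-$k$ extension, and the remaining crossings sit on the vertical column-$j$ segment, all overcrossings, so one slides over rather than under.)

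Your alternative route --- producing an index $i$ with $|w(i)-i|\leq 1$ directly from Theorem~\ref{thm:shallow} --- would work, but proving that lemma is no easier than the isotopy above, and the paper does not take that path.
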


\begin{proof}
We prove this by induction on $n$.

If $w\in S_1$, then the associated link is a single small unknotted and unlinked loop.

If $w(n)=n$, $w'\in S_{n-1}$ is defined by $w'(i)=w(i)$ for all $i$ with $1\leq i\leq n-1$, and $w'$ is unlinked, then the cycle diagram of $w$ is obtained from that of $w'$ by adding a small unknotted and unlinked loop at the top right, so it is also unlinked.

Now suppose $w(n)=k$, $w^{-1}(n)=j$, $w'$ as defined in Theorem~\ref{thm:shallow} is shallow, and $w'(j)=k$ is a right-to-left minimum.  The cycle diagram of $w$ can be obtained from the cycle diagram of $w'$ by deleting the vertical segment
from $(j,j)$ to $(j,k)$ and replacing it with segments from $(j,j)$ to $(j,n)$ to $(n,n)$ to $(n,k)$ to $(j,k)$.  Since $(j,k)$ is a right-to-left minimum in $w'$, the only crossings made by the new
segments are on the vertical segment from $(j,j)$ to $(j,n)$.  Since they are on a vertical segment, these are all overcrossings.  Hence this long loop in the link associated to $w$ can be
slid around over the top of the knot and shrunk to the vertical segment from $(j,j)$ to $(j,k)$, which also only has overcrossings.  Therefore, the link types of $w$ and $w'$ are the same.
By induction, $w'$ is unlinked, so $w$ is also unlinked.

One has a similar argument if $w'(j)=k$ is a left-to-right maximum, except that the crossings are undercrossings associated to horizontal segments and hence the isotopy takes place under the rest of the link.  If $w'(j)=k$ is both a left-to-right maximum and a right-to-left minimum, then $j=k$ and the new segments make no crossings at all, forming a free unknotted link component.
\end{proof}

\begin{example}
Let $w=7563421$.  Here we have $k=1$, $j=1$, and $j=k$ is both a left-to-right maximum and a right-to-left minimum.  One can see that the cycle $(17)$ produces a free unknotted link component that can be shrunk to a little loop at $1$.

\begin{figure}[htbp]
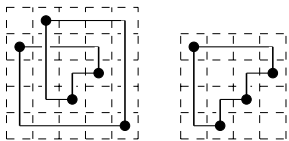

\cyclefig{4,5,2,3,1}\,\,\cyclefig{4,1,2,3}
\caption{\label{diag:45231isotopy}Knot diagrams for $w=45231$ and $w'=4123$}
\end{figure}

Now let $w=45231$.  Then $k=1$, $j=2$, and $w'=4123$.  One can see from Figure~\ref{diag:45231isotopy} that the knot diagrams for $w=45231$ and $w'=4123$ are isotopic as described above.
\end{example}

\end{document}